\let\ORIlabel\label
\let\ORIrefstepcounter\refstepcounter
  \let\label\ORIlabel
  \let\refstepcounter\ORIrefstepcounter
\let\le\leqslant
\let\ge\geqslant
\crefname{remark}{Remark}{Remarks}
\Crefname{remark}{Remark}{Remarks}
\newcommand{\R}{\mathbb{R}}
\newcommand{\abs}[1]{\left\lvert #1 \right\rvert}
\newcommand{\norm}[1]{\left\lVert #1 \right\rVert}
\DeclareMathOperator{\diag}{diag}
\DeclareMathOperator{\cond}{cond}
\newcommand{\U}{U}
\title{\texorpdfstring{Diagonal symmetrisation\\of tridiagonal Toeplitz matrices}{Diagonal symmetrisation of tridiagonal Toeplitz matrices}%
\thanks{Submitted to the editors on \today.}}
\author{Johann Verwee%
\thanks{Independent Researcher (\email{mverwee@gmail.com}).}}
\begin{document}

\maketitle

\begin{abstract}
We develop a self-contained framework for real tridiagonal Toeplitz matrices\\$A_n(a,b,c)$ (diagonal $b$, subdiagonal $a$, superdiagonal $c$) in the symmetrisable regime $ac>0$.
A diagonal similarity transforms $A_n(a,b,c)$ into a symmetric Toeplitz matrix, yielding explicit eigenpairs, a Chebyshev determinant/characteristic polynomial formula, and a closed Green kernel for the inverse.
As an application we give sharp extremal eigenvalue and conditioning formulae in the natural weighted Hilbert space induced by this similarity.
Specialising to the classical repunit matrix $A_n(d,d+1,1)$, we show that $\det(A_n(d,d+1,1))=1+d+\cdots+d^{n}$ and obtain a finite cosine product factorisation of this repunit polynomial, together with quantitative bounds and an explicit inverse in terms of repunits.
\end{abstract}

\begin{keywords}
Tridiagonal Toeplitz matrix, diagonal symmetrisation, Chebyshev polynomial, inverse kernel, condition number, repunit
\end{keywords}

\begin{AMS}
15A18, 15B05, 33C45, 11B83
\end{AMS}

\section{Introduction}\label{sec:intro}

Tridiagonal Toeplitz matrices are among the few structured matrix classes for which one can write down
closed formulae simultaneously for eigenvalues, determinants, and inverses.
The goal of this note is twofold.
First, we present a clean \emph{diagonal symmetrisation} principle which packages several classical computations
into a single statement and makes the natural weighted inner product transparent.
Second, we use this framework to isolate a repunit model as a concrete corollary.
Here a \emph{repunit} (in base $d$) refers to the polynomial $1+d+\cdots+d^{n}$, which also equals $(d^{n+1}-1)/(d-1)$ when $d\ne 1$.
We derive in that setting a finite cosine product factorisation together with information that goes beyond the determinant identity
(in particular, conditioning and explicit inverse kernels).

The central observation is that, when $ac>0$, the Toeplitz matrix $A_n\left( a,b,c \right)$ is diagonally similar
to a \emph{symmetric} tridiagonal Toeplitz matrix with off-diagonal entry $\sqrt{ac}$.
This similarity does not merely transport eigenvalues: it also identifies the correct Hilbert space
in which the original non-symmetric matrix is self-adjoint.
Once this is made explicit, several spectral and analytic features become immediate.

We now fix notation and state the main results.
Throughout, $n\ge 1$ is an integer, $a,b,c\in\R$ with $a\ne 0$ and $c\ne 0$, and we write
\begin{equation}\label{eq:sq}
s:=\sqrt{ac},\qquad q:=\frac{s}{c}.
\end{equation}
whenever $ac>0$.
Thus $s>0$ and $q\ne 0$, and one has $q^{2}=a/c$ and $cq=a/q=s$.

\section{\texorpdfstring{Diagonal symmetrisation and weighted\\self-adjointness}{Diagonal symmetrisation and weighted self-adjointness}}\label{sec:sym}

We begin with the diagonal similarity which drives the rest of the paper.
In the symmetrisable regime $ac>0$ the map is elementary, but it is useful to record it precisely
because it canonically produces a weighted inner product for which $A_n\left( a,b,c \right)$ is self-adjoint.

\begin{definition}[Tridiagonal Toeplitz model]\label{def:An}
For $n\ge 1$ and parameters $a,b,c\in\R$, we denote by $A_n\left( a,b,c \right)$ the $n\times n$ tridiagonal Toeplitz matrix
with diagonal entries $b$, subdiagonal entries $a$, and superdiagonal entries $c$, i.e.
\[
A_n\left( a,b,c \right)
=
\begin{pmatrix}
b & c & 0 & \cdots & 0\\
a & b & c & \ddots & \vdots\\
0 & a & b & \ddots & 0\\
\vdots & \ddots & \ddots & \ddots & c\\
0 & \cdots & 0 & a & b
\end{pmatrix}.
\]
\end{definition}

We now adopt the standing assumption $ac>0$ and we keep the parameters $s$ and $q$ from \eqref{eq:sq} fixed throughout.
For brevity we set
\[
\begin{aligned}
A_n &\coloneqq A_n\left( a,b,c \right), \qquad S \coloneqq S_n\left( b,s \right)=A_n\left( s,b,s \right),\\
D &\coloneqq \diag\left( 1,q,q^2,\dots,q^{n-1} \right), \qquad W \coloneqq D^{-2}.
\end{aligned}
\]

With this notation in place, the diagonal similarity becomes a one-line computation.

\begin{lemma}[Diagonal symmetrisation]\label{lem:symmetrisation}
One has
\begin{equation}\label{eq:similarity}
D^{-1} A_n D = S.
\end{equation}
Equivalently, $A_n$ is self-adjoint in the weighted inner product induced by $W$, in the sense that
\begin{equation}\label{eq:weighted}
A_n^{T} W = W A_n.
\end{equation}
\end{lemma}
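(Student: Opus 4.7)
The plan is to verify \eqref{eq:similarity} by a direct entrywise computation exploiting the well-known fact that conjugation by a diagonal matrix rescales entries: if $D=\diag(d_1,\dots,d_n)$ with $d_i=q^{i-1}$, then $(D^{-1} A_n D)_{ij}=(d_j/d_i)(A_n)_{ij}=q^{j-i}(A_n)_{ij}$. Since $A_n$ has only three nonzero diagonals, I only need to examine three cases. On the main diagonal ($j=i$), the factor $q^{0}=1$ leaves $b$ unchanged. On the superdiagonal ($j=i+1$), the entry $c$ is multiplied by $q$, giving $cq=s$ by the identity $cq=s$ recorded just after \eqref{eq:sq}. On the subdiagonal ($j=i-1$), the entry $a$ is multiplied by $q^{-1}$, giving $a/q=s$ by the same identity. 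All three diagonals of $D^{-1} A_n D$ therefore match those of $S=A_n(s,b,s)$, proving \eqref{eq:similarity}.

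For \eqref{eq:weighted} I would avoid a second entrywise computation and instead deduce it formally from \eqref{eq:similarity} combined with the observation that $S$ is symmetric ($S^T=S$). Taking transposes in \eqref{eq:similarity} yields $D A_n^T D^{-1}=S^T=S=D^{-1}A_n D$. Multiplying this identity on the left by $D^{-1}$ and on the right by $D$ gives $D^{-2}A_n^T=A_n D^{-2}$ is not quite the right manipulation; more cleanly, multiplying on the left by $D^{-1}$ and on the right by $D$ produces $D^{-2}A_n=A_n^T D^{-2}$ after transposing once more, which is exactly $WA_n=A_n^T W$ with $W=D^{-2}$.

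I do not expect any genuine obstacle: the similarity is built in so that the $q$ exponents on adjacent rows differ by exactly one, which is the unique way to equalise the off-diagonal entries $a$ and $c$ through a diagonal conjugation. The only place where care is needed is the sign/branch of $q=s/c$; since $ac>0$, both $s>0$ and $c$ of definite sign make $q$ well defined and nonzero, so $D$ is invertible and the computation is meaningful. I would also remark, for context, that \eqref{eq:weighted} is precisely the statement that $A_n$ is self-adjoint with respect to the inner product $\langle x,y\rangle_W:=y^T W x$, which justifies the later use of the weighted Hilbert space in the spectral applications.
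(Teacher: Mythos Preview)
Your argument is correct and follows essentially the same route as the paper: an entrywise check of the three diagonals for \eqref{eq:similarity}, followed by transposing the similarity and conjugating by powers of $D$ to reach \eqref{eq:weighted}. The only cosmetic difference is that your derivation of \eqref{eq:weighted} is written in a somewhat meandering, self-correcting way; the clean version is simply that $D A_n^{T} D^{-1}=S=D^{-1}A_nD$ gives $A_n^{T}=D^{-2}A_n D^{2}$, and right-multiplying by $D^{-2}=W$ yields $A_n^{T}W=WA_n$ directly, without the extra transpose.
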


\begin{proof}
Since $D$ is diagonal, the diagonal entries are unchanged.
For $1\le i\le n-1$ we have
\[
\left( D^{-1}A_nD \right)_{i,i+1}
= D_{ii}^{-1}\, c \, D_{i+1,i+1}
= q^{-(i-1)} c q^{i}
= cq
= s,
\]
and similarly
\[
\left( D^{-1}A_nD \right)_{i+1,i}
= D_{i+1,i+1}^{-1}\, a \, D_{ii}
= q^{-i} a q^{i-1}
= \frac{a}{q}
= s.
\]
This proves \eqref{eq:similarity}.
For \eqref{eq:weighted}, transpose \eqref{eq:similarity} to obtain
\[
D A_n^{T} D^{-1} = S_n\left( b,s \right)
= D^{-1}A_nD,
\]
and multiply by $D^{-1}$ on the left and by $D$ on the right.
Since $W=D^{-2}$, this yields $A_n^{T}W=WA_n$.
\end{proof}

\begin{remark}[The canonical weighted Hilbert space]\label{rem:weightedspace}
If $ac>0$ and $W=D^{-2}$ as above, then \eqref{eq:weighted} is equivalent to the statement that
$A_n$ is self-adjoint for the inner product
\[
\left\langle x,y \right\rangle_{W}:= x^{T} W y\qquad\left( x,y\in\R^{n} \right).
\]
In particular, $A_n$ has a complete set of eigenvectors that are orthogonal for
$\left\langle\cdot,\cdot\right\rangle_{W}$.
\end{remark}

The similarity of Lemma~\ref{lem:symmetrisation} reduces spectral questions to the symmetric case.
We next spell out the diagonalisation, the determinant in terms of Chebyshev polynomials, and a Green kernel for the inverse.

\section{Spectrum and Chebyshev determinant formula}\label{sec:spectrum}

The diagonalisation of symmetric tridiagonal Toeplitz matrices is classical and can be proven by a short
discrete Fourier--sine computation.
We include a complete proof because it is the most economical way to keep later arguments self-contained.

\begin{lemma}[Orthogonality for symmetric eigenvectors]\label{lem:symorth}
Let $S$ be a real symmetric matrix.
If $Su=\lambda u$ and $Sv=\mu v$ with $\lambda\ne \mu$, then $u^{T}v=0$.
\end{lemma}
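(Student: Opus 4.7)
The plan is to use the standard two-way evaluation of the scalar $u^{T}Sv$, exploiting symmetry of $S$ to move the matrix from one side of the inner product to the other. This forces a scalar identity of the form $(\lambda-\mu)u^{T}v=0$, from which the conclusion is immediate.

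Concretely, I would first compute $u^{T}Sv$ by letting $S$ act to the right: since $Sv=\mu v$, this gives $u^{T}Sv=\mu\,u^{T}v$. Next I would compute the same quantity by letting $S$ act to the left, using $S=S^{T}$: then $u^{T}Sv=u^{T}S^{T}v=(Su)^{T}v=\lambda\,u^{T}v$. Subtracting the two expressions yields $(\lambda-\mu)\,u^{T}v=0$, and the hypothesis $\lambda\ne\mu$ allows division to conclude $u^{T}v=0$.

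There is essentially no obstacle here: the statement is the textbook symmetry argument for eigenvectors of distinct eigenvalues, and the only ingredient is $S=S^{T}$ together with associativity of the matrix--vector product. The one place where care is warranted is making explicit the use of the transpose (rather than, say, any complex conjugation), since the lemma is stated for real symmetric $S$ and real eigenvalues. The proof as sketched is a three- or four-line computation, which is in keeping with the author's stated intent of recording it only to keep later arguments self-contained.
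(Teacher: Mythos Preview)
Your proof is correct and essentially identical to the paper's own argument: both evaluate the scalar $u^{T}Sv$ in two ways using $S=S^{T}$ to obtain $\lambda\,u^{T}v=\mu\,u^{T}v$, and then conclude from $\lambda\ne\mu$. The only difference is cosmetic ordering of the chain of equalities.
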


\begin{proof}
We compute $\lambda u^{T}v=\left( Su \right)^{T}v=u^{T}S^{T}v=u^{T}Sv=\mu u^{T}v$.
Since $\lambda\ne \mu$, this forces $u^{T}v=0$.
\end{proof}

We now turn to explicit formulae for the spectrum and eigenvectors. The symmetrised picture is classical, and the diagonal similarity transports it back to $A_n$.

\begin{theorem}[Explicit eigenpairs]\label{thm:eigenpairs}
For $1\le k\le n$, define
\[
\theta_k:=\frac{k\pi}{n+1},
\qquad
\lambda_k:=b+2s\cos\left( \theta_k \right).
\]
Then $\lambda_1,\dots,\lambda_n$ are the eigenvalues of $A_n$.
Moreover, a corresponding right eigenvector $r^{(k)}\in\R^n$ may be chosen with entries
\begin{equation}\label{eq:rightvec}
r^{(k)}_j = q^{j-1}\sin\left( j\theta_k \right),\qquad 1\le j\le n,
\end{equation}
where $q$ is as in \eqref{eq:sq}.
\end{theorem}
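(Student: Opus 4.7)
The plan is to use Lemma~\ref{lem:symmetrisation} to reduce the eigenproblem for $A_n$ to the symmetric model $S$, diagonalise $S$ by a discrete sine ansatz, and then transport eigenvectors back through $D$. First I would consider the symmetric matrix $S=S_n(b,s)$ and try the ansatz $v^{(k)}_j=\sin\left( j\theta_k \right)$, extended by the convention $v^{(k)}_0=v^{(k)}_{n+1}=0$. The interior rows of $S v^{(k)}=\lambda v^{(k)}$ reduce to
\[
s\sin\left( (j-1)\theta_k \right)+b\sin\left( j\theta_k \right)+s\sin\left( (j+1)\theta_k \right)=\lambda\,\sin\left( j\theta_k \right),
\]
which collapses via the identity $\sin\left( (j-1)\theta \right)+\sin\left( (j+1)\theta \right)=2\cos(\theta)\sin\left( j\theta \right)$ to $\lambda=b+2s\cos\left( \theta_k \right)=\lambda_k$. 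The first and last rows require the boundary values to vanish, and here the choice $\theta_k=k\pi/(n+1)$ is crucial: $\sin(0)=0$ handles $j=0$, while $\sin\left( (n+1)\theta_k \right)=\sin\left( k\pi \right)=0$ handles $j=n+1$. So $v^{(k)}$ is indeed an eigenvector of $S$ with eigenvalue $\lambda_k$.

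Next I would verify that these exhaust the spectrum. Since $\cos$ is strictly decreasing on $(0,\pi)$ and $\theta_1,\dots,\theta_n\in(0,\pi)$ are distinct, the values $\lambda_1,\dots,\lambda_n$ are pairwise distinct; as $S$ is $n\times n$, we have found all of its eigenvalues. Each $v^{(k)}$ is nonzero because its first entry $\sin\left( \theta_k \right)\ne 0$.

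Finally I would push the result back to $A_n$ using \eqref{eq:similarity}. Writing $A_n D=DS$, any eigenpair $\left( \lambda_k,v^{(k)} \right)$ of $S$ yields
\[
A_n\left( Dv^{(k)} \right)=D S v^{(k)}=\lambda_k\left( Dv^{(k)} \right),
\]
so $r^{(k)}:=Dv^{(k)}$ is an eigenvector of $A_n$ for $\lambda_k$, and its $j$-th entry is $D_{jj}v^{(k)}_j=q^{j-1}\sin\left( j\theta_k \right)$, matching \eqref{eq:rightvec}. Similar matrices share eigenvalues, so the $\lambda_k$ are exactly the eigenvalues of $A_n$.

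There is no real obstacle here once Lemma~\ref{lem:symmetrisation} is available: the whole argument is driven by a single trigonometric identity and the clever choice of $\theta_k$ that kills the artificial boundary terms. The only point that deserves care is ensuring that the sine ansatz solves the first and last rows of the tridiagonal recurrence without modification, which is precisely what $\sin\left( (n+1)\theta_k \right)=0$ guarantees.
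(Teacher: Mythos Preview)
Your proof is correct and follows essentially the same route as the paper: reduce to $S$ via Lemma~\ref{lem:symmetrisation}, verify the discrete sine ansatz using the product-to-sum identity and the boundary vanishing $\sin(0)=\sin((n+1)\theta_k)=0$, then push eigenvectors back through $D$. The only cosmetic difference is that the paper invokes orthogonality of eigenvectors (via Lemma~\ref{lem:symorth}) to argue that the spectrum is exhausted, whereas you more directly observe that $n$ pairwise distinct eigenvalues of an $n\times n$ matrix must be all of them; your shortcut is perfectly valid here.
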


\begin{proof}
By Lemma~\ref{lem:symmetrisation}, $A_n$ is similar to $S_n\left( b,s \right)$,
hence it suffices to diagonalise $S_n\left( b,s \right)$.
Fix $1\le k\le n$ and consider the vector $u^{(k)}\in\R^n$ with components
$u^{(k)}_j=\sin\left( j\theta_k \right)$.
For $2\le j\le n-1$ the $j$-th component of $S_n\left( b,s \right)u^{(k)}$ equals
\[
s\sin\left( \left( j-1 \right)\theta_k \right) + b\sin\left( j\theta_k \right) + s\sin\left( \left( j+1 \right)\theta_k \right).
\]
Using $\sin\left( \left( j-1 \right)\theta_k \right)+\sin\left( \left( j+1 \right)\theta_k \right)=2\cos\left( \theta_k \right)\sin\left( j\theta_k \right)$,
this becomes
\[
\left( b+2s\cos\left( \theta_k \right) \right)\sin\left( j\theta_k \right)=\lambda_k u^{(k)}_j.
\]
At $j=1$ and $j=n$ the same identity holds because
\[
\sin\left( 0\cdot \theta_k \right)=0
\qquad\text{and}\qquad
\sin\left( \left( n+1 \right)\theta_k \right)=\sin\left( k\pi \right)=0,
\]
which encode the boundary conditions.
Thus $S_n\left( b,s \right)u^{(k)}=\lambda_k u^{(k)}$.

The eigenvalues $\lambda_k=b+2s\cos\left( \theta_k \right)$ are pairwise distinct because
$\cos\left( \theta \right)$ is strictly decreasing on $\left( 0,\pi \right)$ and
$0<\theta_1<\cdots<\theta_n<\pi$.
Since $S_n\left( b,s \right)$ is symmetric, Lemma~\ref{lem:symorth} shows that the eigenvectors
$u^{(1)},\dots,u^{(n)}$ are mutually orthogonal in the Euclidean inner product; in particular they are linearly independent.
Therefore $S_n\left( b,s \right)$ admits $n$ linearly independent eigenvectors and
$\lambda_1,\dots,\lambda_n$ exhaust its spectrum.

Finally, Lemma~\ref{lem:symmetrisation} gives
\[
A_n\left( D u^{(k)} \right)
=
D S_n\left( b,s \right)u^{(k)}
=
\lambda_k \left( D u^{(k)} \right).
\]
Since $\left( Du^{(k)} \right)_j=q^{j-1}\sin\left( j\theta_k \right)$, this proves \eqref{eq:rightvec}.
\end{proof}

The eigenvalues yield a factorisation of the characteristic polynomial and, in particular, of the determinant.
It is convenient to express this determinant in terms of the Chebyshev polynomials $\U_m$ of the second kind,
characterised by $\U_0\left( x \right)=1$, $\U_1\left( x \right)=2x$, and the recursion
$\U_{m+1}\left( x \right)=2x\,\U_m\left( x \right)-\U_{m-1}\left( x \right)$.

\begin{proposition}[Determinant and characteristic polynomial]\label{prop:det}
Let $x:=b/\left( 2s \right)$. Then
\begin{equation}\label{eq:detCheb}
\det\left( A_n \right)= s^{\,n}\U_n\left( x \right).
\end{equation}
Equivalently, the characteristic polynomial of $A_n$ is given by
\[
\chi_n\left( t \right)=\det\left( tI_n-A_n \right)= s^{\,n}\U_n\left( \frac{t-b}{2s} \right).
\]
\end{proposition}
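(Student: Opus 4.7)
The approach is to obtain \eqref{eq:detCheb} directly from Theorem~\ref{thm:eigenpairs}, since $\det(A_n)$ equals the product of the eigenvalues. The characteristic polynomial identity will then follow by the same argument applied to $tI_n-A_n$, which has the same structural form.

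First I would write
\[
\det(A_n)=\prod_{k=1}^{n}\lambda_k=\prod_{k=1}^{n}\bigl(b+2s\cos(\theta_k)\bigr)=(2s)^{n}\prod_{k=1}^{n}\bigl(x+\cos(\theta_k)\bigr),
\]
using $x=b/(2s)$. The key step is to identify this product with $\U_n(x)$. I would recall (or briefly re-derive from the trigonometric identity $\U_n(\cos\theta)\sin\theta=\sin((n+1)\theta)$) that $\U_n$ has leading coefficient $2^{n}$ and simple roots exactly at $\cos(k\pi/(n+1))$ for $k=1,\dots,n$, hence
\[
\U_n(x)=2^{n}\prod_{k=1}^{n}\bigl(x-\cos(\theta_k)\bigr).
\]
To reconcile the sign, I would use the symmetry $k\mapsto n+1-k$, under which $\cos(\theta_k)\mapsto-\cos(\theta_k)$; this is a bijection on $\{1,\dots,n\}$, so the multiset $\{\cos(\theta_k)\}$ equals the multiset $\{-\cos(\theta_k)\}$ and therefore $\prod_k(x+\cos(\theta_k))=\prod_k(x-\cos(\theta_k))=\U_n(x)/2^{n}$. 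Substituting back gives $\det(A_n)=(2s)^{n}\U_n(x)/2^{n}=s^{n}\U_n(x)$.

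For the characteristic polynomial, I would observe that $tI_n-A_n=A_n(-a,\,t-b,\,-c)$, and since $(-a)(-c)=ac>0$ the symmetrisable regime is preserved with the same value of $s=\sqrt{ac}$. Applying \eqref{eq:detCheb} to these parameters, with $b$ replaced by $t-b$, yields $\chi_n(t)=s^{n}\U_n((t-b)/(2s))$.

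The only mildly delicate point is the sign manipulation $\prod(x+\cos\theta_k)=\prod(x-\cos\theta_k)$; everything else is a bookkeeping consequence of Theorem~\ref{thm:eigenpairs} and the standard factorisation of $\U_n$. Since the paper aims to be self-contained, I would include the one-line justification of the roots of $\U_n$ via the identity $\sin((n+1)\theta)=\U_n(\cos\theta)\sin\theta$ rather than appealing to an external reference.
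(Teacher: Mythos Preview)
Your proof is correct but takes a genuinely different route from the paper. The paper establishes \eqref{eq:detCheb} via the continuant recursion: by similarity $\det(A_n)=\det(S_n(b,s))$, and expanding along the first row gives $\Delta_n=b\,\Delta_{n-1}-s^2\Delta_{n-2}$; the substitution $\Delta_n=s^n\delta_n$ then collapses this to $\delta_n=2x\,\delta_{n-1}-\delta_{n-2}$ with $\delta_0=1$, $\delta_1=2x$, which is exactly the defining recursion for $\U_n$ stated just before the proposition. This uses only the three-term recurrence for $\U_n$ and needs neither Theorem~\ref{thm:eigenpairs} nor the root description of $\U_n$. Your argument instead exploits the already-computed eigenvalues together with the factorisation $\U_n(x)=2^n\prod_k(x-\cos\theta_k)$, and the symmetry $\theta_k\mapsto\pi-\theta_k$ to convert $\prod_k(x+\cos\theta_k)$ into $\prod_k(x-\cos\theta_k)$. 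Both handle the characteristic polynomial the same way, by observing that $tI_n-A_n=A_n(-a,t-b,-c)$ keeps $s$ unchanged. The paper's approach is slightly more economical and independent of the spectral theorem, while yours makes the link between the eigenvalue product and the Chebyshev zero set explicit, which is conceptually attractive and directly anticipates the cosine-product identity of Theorem~\ref{thm:repunitproduct}.
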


\begin{proof}
By similarity, we have
\[
\det\left( A_n \right)=\det\left( S_n\left( b,s \right) \right).
\]

Let $\Delta_n:=\det\left( S_n\left( b,s \right) \right)$.
Expanding along the first row shows that $\Delta_n$ satisfies the continuant recursion
\[
\Delta_n = b\,\Delta_{n-1}-s^2\,\Delta_{n-2}\qquad\left( n\ge 2 \right),
\]
with initial values $\Delta_0=1$ and $\Delta_1=b$.
Writing $\Delta_n=s^{\,n}\delta_n$ gives
\[
\delta_n = \frac{b}{s}\delta_{n-1}-\delta_{n-2}=2x\,\delta_{n-1}-\delta_{n-2}
\qquad\left( n\ge 2 \right),
\]
with $\delta_0=1$ and $\delta_1=2x$.
Thus $\delta_n=\U_n\left( x \right)$ for all $n$, proving \eqref{eq:detCheb}.
The characteristic polynomial formula follows by replacing $b$ with $t-b$ and noting that $s$ is unchanged.
\end{proof}

\section{A closed inverse kernel and decay in the gapped regime}\label{sec:inverse}

Beyond eigenvalues and determinants, diagonal symmetrisation also gives a clean route to explicit inverses.
There are several well-known formulae in the literature for inverses of tridiagonal Toeplitz matrices
(see, for instance, da Fonseca--Petronilho~\cite{daFonsecaPetronilho2001} and Usmani~\cite{Usmani1994});
here we record a particularly concise representation in terms of Chebyshev polynomials,
and we explain how it immediately implies exponential off-diagonal decay in the uniformly positive definite regime $b>2\sqrt{ac}$.

\begin{theorem}[Inverse kernel]\label{thm:inversekernel}
Let $x:=b/\left( 2s \right)$.
If $\U_n\left( x \right)\ne 0$, then $A_n$ is invertible, and for $1\le i\le j\le n$ one has
\begin{equation}\label{eq:inverseCheb}
\left( A_n^{-1} \right)_{ij}
=
\frac{\left( -1 \right)^{i+j}}{s}\,
q^{\,i-j}\,
\frac{\U_{i-1}\left( x \right)\U_{n-j}\left( x \right)}{\U_n\left( x \right)}.
\end{equation}
For $1\le j\le i\le n$ one also has the equivalent formula
\[
\left( A_n^{-1} \right)_{ij}
=
\frac{\left( -1 \right)^{i+j}}{s}\,q^{\,i-j}\,\frac{\U_{j-1}\left( x \right)\U_{n-i}\left( x \right)}{\U_n\left( x \right)}.
\]
Equivalently, the inverse satisfies the weighted symmetry relation
\[
\left( A_n^{-1} \right)_{ij}
=
q^{2\left( i-j \right)}\left( A_n^{-1} \right)_{ji}.
\]
\end{theorem}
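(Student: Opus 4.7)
The plan is to reduce everything to the symmetric case via Lemma~\ref{lem:symmetrisation} and then compute the inverse of $S = S_n(b,s)$ by Cramer's rule, exploiting a block decomposition of the relevant minor. Invertibility is immediate: by Proposition~\ref{prop:det}, $\det(A_n) = s^n \U_n(x)$, and the hypothesis $\U_n(x)\ne 0$ guarantees $A_n$ is invertible. Since $A_n = D S D^{-1}$ we have $A_n^{-1} = D S^{-1} D^{-1}$, so entry-wise
\[
\left( A_n^{-1} \right)_{ij} = D_{ii}\, \left( S^{-1} \right)_{ij}\, D_{jj}^{-1} = q^{\,i-j}\, \left( S^{-1} \right)_{ij}.
\]
Thus the task reduces to producing a closed formula for $(S^{-1})_{ij}$ when $i\le j$.

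For this I would apply Cramer's rule: $(S^{-1})_{ij} = (-1)^{i+j} M_{ji}/\det S$, where $M_{ji}$ is the minor obtained by deleting row $j$ and column $i$. Partition the surviving rows and columns as
\[
\{1,\dots,i-1\}\cup\{i,\dots,j-1\}\cup\{j+1,\dots,n\}
\quad\text{and}\quad
\{1,\dots,i-1\}\cup\{i+1,\dots,j\}\cup\{j+1,\dots,n\}.
\]
Because $S_{rc}=0$ whenever $\left| r-c \right|\ge 2$, each block strictly above the main block-diagonal vanishes (for instance in block $(1,2)$ one has $r\le i-1$ and $c\ge i+1$, so $c-r\ge 2$). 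The matrix is therefore block-lower-triangular. The diagonal blocks are $S_{i-1}(b,s)$, a $(j-i)\times(j-i)$ intermediate block which, after index shifts, is lower triangular with $s$ on its diagonal (hence determinant $s^{\,j-i}$), and $S_{n-j}(b,s)$. Invoking Proposition~\ref{prop:det} on the outer blocks gives
\[
M_{ji} = s^{\,i-1}\U_{i-1}(x) \cdot s^{\,j-i} \cdot s^{\,n-j}\U_{n-j}(x) = s^{\,n-1}\,\U_{i-1}(x)\,\U_{n-j}(x).
\]
Dividing by $\det S = s^{\,n}\U_n(x)$ yields $(S^{-1})_{ij} = (-1)^{i+j}\U_{i-1}(x)\U_{n-j}(x)/(s\,\U_n(x))$, and the factor $q^{\,i-j}$ from Step~1 delivers \eqref{eq:inverseCheb}.

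The two remaining assertions are immediate consequences of the symmetry of $S$. Since $S^{T}=S$, its inverse is symmetric, so $(S^{-1})_{ij}=(S^{-1})_{ji}$; applying the $i\le j$ formula with the roles of $i$ and $j$ swapped gives the alternative expression valid for $j\le i$. For the weighted symmetry, combine
\[
\left( A_n^{-1} \right)_{ij} = q^{\,i-j}\left( S^{-1} \right)_{ij} = q^{\,i-j}\left( S^{-1} \right)_{ji}
\qquad\text{and}\qquad
\left( A_n^{-1} \right)_{ji} = q^{\,j-i}\left( S^{-1} \right)_{ji}
\]
to read off the factor $q^{\,2(i-j)}$.

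The only genuinely non-mechanical step is the block decomposition of the minor in Step~2: one has to track which off-block entries vanish under the tridiagonal constraint and to handle the index shift in the $(j-i)\times(j-i)$ middle block carefully (its $(r',c')$ entry ends up being $s$ on the main diagonal, $b$ on the subdiagonal, $s$ on the second subdiagonal, and zero above). Once that bookkeeping is dispatched, the rest is a substitution of the Chebyshev determinants from Proposition~\ref{prop:det} and a single rescaling by powers of $q$.
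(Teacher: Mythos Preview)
Your proof is correct and complete, but it takes a genuinely different route from the paper. The paper proceeds by the discrete Green--function method: it defines two solutions $f_k=(-1)^{k-1}\U_{k-1}(x)$ and $g_k=(-1)^{k-1}\U_{n-k}(x)$ of the homogeneous recursion $s h_{k-1}+b h_k+s h_{k+1}=0$, shows their discrete Wronskian is constant and equal to $\U_n(x)$, writes down the candidate kernel $G_{ij}=f_{\min(i,j)}g_{\max(i,j)}/(s\,\U_n(x))$, and then verifies $SG=I_n$ column by column. Your argument instead computes the cofactors directly via Cramer's rule, exploiting the block-lower-triangular structure of the $(j,i)$ minor of a tridiagonal matrix and reading off the diagonal block determinants from Proposition~\ref{prop:det}.

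Each approach has its merits. Your cofactor computation is arguably more elementary, in that it requires no guess for the form of the inverse and no Wronskian identity; the block decomposition of tridiagonal minors is a classical device and the bookkeeping you outline (especially the lower-triangular middle block with diagonal $s$) is accurate. The paper's Green--function argument, on the other hand, makes the difference-equation structure explicit and feeds more naturally into the hyperbolic representation of Remark~\ref{rem:hyperbolic} and the decay estimate of Corollary~\ref{cor:decay}, since the sequences $f_k,g_k$ are already the ``growing'' and ``decaying'' fundamental solutions once one substitutes $x=\cosh\gamma$.
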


\begin{proof}
By Lemma~\ref{lem:symmetrisation}, we have $A_n=D S D^{-1}$ with $S=S_n\left( b,s \right)$, hence
\[
A_n^{-1}=D S^{-1} D^{-1}
\qquad\text{and}\qquad
\left( A_n^{-1} \right)_{ij}=q^{\,i-j}\left( S^{-1} \right)_{ij}.
\]
It therefore suffices to establish \eqref{eq:inverseCheb} in the symmetric case $A_n=S$ (so $q=1$).
Write $b=2sx$.
By Proposition~\ref{prop:det}, the assumption $\U_n\left( x \right)\ne 0$ implies $\det\left( S \right)\ne 0$.

We adopt the standard extension $\U_{-1}\left( x \right):=0$.
Define sequences $\left( f_k \right)_{k=0}^{n+1}$ and $\left( g_k \right)_{k=0}^{n+1}$ by
\[
f_k:=\left( -1 \right)^{k-1}\U_{k-1}\left( x \right),
\qquad
g_k:=\left( -1 \right)^{k-1}\U_{n-k}\left( x \right).
\]
Then $f_0=0$ and $g_{n+1}=0$, and for every $1\le k\le n$ the Chebyshev recursion gives the homogeneous relations
\begin{equation}\label{eq:homrec}
s f_{k-1}+b f_k+s f_{k+1}=0,
\qquad
s g_{k-1}+b g_k+s g_{k+1}=0.
\end{equation}
Indeed, since $b=2sx$, we compute
\[
s f_{k-1}+b f_k+s f_{k+1}
=
\left( -1 \right)^{k}s\left( \U_{k-2}\left( x \right)-2x\,\U_{k-1}\left( x \right)+\U_{k}\left( x \right) \right)=0,
\]
and similarly for $g$.

Next, for $0\le k\le n$ define the discrete Wronskian
\[
W_k:= f_k g_{k+1}-f_{k+1} g_k.
\]
Using \eqref{eq:homrec} twice and cancelling terms, one checks that $W_k$ is constant in $k$.
More explicitly, multiplying the first identity in \eqref{eq:homrec} by $g_{k+1}$ and the second by $f_{k+1}$ and subtracting yields
\[
s\left( f_k g_{k+1}-f_{k+1} g_k \right)
=
s\left( f_{k+1} g_{k+2}-f_{k+2} g_{k+1} \right),
\]
hence $W_k=W_{k+1}$.
Evaluating at $k=n$ gives
\[
W_k=W_n=f_n g_{n+1}-f_{n+1} g_n = -f_{n+1} g_n.
\]
Since $f_{n+1}=\left( -1 \right)^{n}\U_n\left( x \right)$ and $g_n=\left( -1 \right)^{n-1}\U_0\left( x \right)=\left( -1 \right)^{n-1}$,
we obtain $W_k=\U_n\left( x \right)$ for all $k$.

For $1\le i,j\le n$, define
\[
G_{ij}:=
\frac{1}{s\,\U_n\left( x \right)}
\begin{cases}
f_i g_j,& i\le j,\\
f_j g_i,& i>j.
\end{cases}
\]
Then $G$ is symmetric, and for $i\le j$ we have
\[
G_{ij}=\frac{1}{s\,\U_n\left( x \right)}\left( -1 \right)^{i+j}\U_{i-1}\left( x \right)\U_{n-j}\left( x \right),
\]
which is exactly \eqref{eq:inverseCheb} in the case $q=1$.

It remains to show that $SG=I_n$.
Fix $j$ and consider the $j$-th column $u^{(j)}=\left( G_{1j},\dots,G_{nj} \right)^{T}$.
For $k<j$ we have $u^{(j)}_k=\frac{g_j}{s\U_n\left( x \right)}f_k$, so \eqref{eq:homrec} implies $\left( Su^{(j)} \right)_k=0$.
For $k>j$ we have $u^{(j)}_k=\frac{f_j}{s\U_n\left( x \right)}g_k$, so again $\left( Su^{(j)} \right)_k=0$.
At $k=j$, we compute using the definitions and the Wronskian identity $W_j=\U_n\left( x \right)$ that
\begin{align*}
\left( Su^{(j)} \right)_j
&= sG_{j-1,j}+bG_{jj}+sG_{j+1,j}\\
&= \frac{1}{\U_n\left( x \right)}\left( f_{j-1} g_j+\frac{b}{s}f_j g_j+f_j g_{j+1} \right)\\
&= \frac{1}{\U_n\left( x \right)}\left( -f_{j+1} g_j+f_j g_{j+1} \right)\qquad\text{by \eqref{eq:homrec} at $k=j$}\\
&= \frac{W_j}{\U_n\left( x \right)}\\
&= 1.
\end{align*}
Therefore $Su^{(j)}=e_j$ for every $j$, where $e_j$ is the $j$-th standard basis vector of $\R^n$, hence $SG=I_n$ and $G=S^{-1}$.
The general case follows from $A_n^{-1}=D S^{-1} D^{-1}$.
\end{proof}

To make decay estimates transparent in the uniformly positive definite regime, we recall the standard hyperbolic parametrisation of $\U_m$.

\begin{lemma}[Hyperbolic evaluation of $\U_m$]\label{lem:Uhyper}
For every $m\ge 0$ and every $\gamma\in\R$ one has
\[
\U_m\left( \cosh\left( \gamma \right) \right)=\frac{\sinh\left( \left( m+1 \right)\gamma \right)}{\sinh\left( \gamma \right)}.
\]
\end{lemma}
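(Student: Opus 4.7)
The plan is to prove the identity by induction on $m$, using the Chebyshev recursion that already appears in the paper together with the standard product-to-sum identity $2\cosh(\gamma)\sinh(\alpha)=\sinh(\alpha+\gamma)+\sinh(\alpha-\gamma)$. Since $\sinh(\gamma)$ appears in a denominator, I first handle $\gamma=0$ separately: here $\cosh(0)=1$, and one checks by a second induction (or directly from the recursion) that $\U_m(1)=m+1$, which agrees with the limiting value $\lim_{\gamma\to 0}\sinh\bigl((m+1)\gamma\bigr)/\sinh(\gamma)=m+1$. For the rest of the argument I fix $\gamma\ne 0$, so that division by $\sinh(\gamma)$ is legitimate.

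For the inductive argument I would verify the base cases directly: for $m=0$, $\U_0(\cosh\gamma)=1=\sinh(\gamma)/\sinh(\gamma)$, and for $m=1$, $\U_1(\cosh\gamma)=2\cosh(\gamma)=\sinh(2\gamma)/\sinh(\gamma)$ by the duplication formula. These anchor the recursion on two consecutive values, which is what the three-term relation $\U_{m+1}(x)=2x\,\U_m(x)-\U_{m-1}(x)$ requires.

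For the inductive step, assuming the claim at indices $m-1$ and $m$, I substitute into the recursion at $x=\cosh(\gamma)$ and collect everything over the common denominator $\sinh(\gamma)$, reducing the task to showing
\[
2\cosh(\gamma)\sinh\bigl((m+1)\gamma\bigr)-\sinh\bigl(m\gamma\bigr)=\sinh\bigl((m+2)\gamma\bigr).
\]
This is immediate from the product-to-sum identity applied with $\alpha=(m+1)\gamma$, which expands the left-hand product as $\sinh\bigl((m+2)\gamma\bigr)+\sinh(m\gamma)$, and the $\sinh(m\gamma)$ terms cancel.

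There is no genuine obstacle here; the only delicate point is the indeterminate form at $\gamma=0$, which is handled by treating the right-hand side as a removable singularity (or equivalently, by recording $\U_m(1)=m+1$ as a limiting case). Everything else is a short computation driven entirely by the defining recursion of $\U_m$ and one hyperbolic product-to-sum identity.
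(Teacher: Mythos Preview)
Your proof is correct and follows essentially the same route as the paper: verify the two base cases, then use the hyperbolic addition formula $\sinh((m+2)\gamma)=2\cosh(\gamma)\sinh((m+1)\gamma)-\sinh(m\gamma)$ to propagate the Chebyshev recursion. If anything, you are slightly more careful than the paper in isolating the $\gamma=0$ case before dividing by $\sinh(\gamma)$.
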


\begin{proof}
Let $F_m\left( \gamma \right):=\sinh\left( \left( m+1 \right)\gamma \right)/\sinh\left( \gamma \right)$.
Then $F_0\left( \gamma \right)=1$ and $F_1\left( \gamma \right)=2\cosh\left( \gamma \right)$.
The addition formula gives, for $m\ge 1$,
\[
\sinh\left( \left( m+2 \right)\gamma \right)
=
2\cosh\left( \gamma \right)\sinh\left( \left( m+1 \right)\gamma \right)-\sinh\left( m\gamma \right).
\]
Dividing by $\sinh\left( \gamma \right)$ yields
$F_{m+1}\left( \gamma \right)=2\cosh\left( \gamma \right)F_m\left( \gamma \right)-F_{m-1}\left( \gamma \right)$.
Thus $F_m$ satisfies the defining recursion and initial values of $\U_m$ evaluated at $x=\cosh\left( \gamma \right)$, and the claim follows.
\end{proof}

\begin{remark}[A hyperbolic form when $x>1$]\label{rem:hyperbolic}
When $x>1$, there exists $\gamma>0$ such that $x=\cosh\left( \gamma \right)$.
By Lemma~\ref{lem:Uhyper}, one has $\U_m\left( \cosh\left( \gamma \right) \right)=\sinh\left( \left( m+1 \right)\gamma \right)/\sinh\left( \gamma \right)$
turns \eqref{eq:inverseCheb} into a Green kernel:
for $1\le i\le j\le n$,
\[
\left( S_n\left( b,s \right)^{-1} \right)_{ij}
=
\frac{\left( -1 \right)^{i+j}}{s}\,
\frac{\sinh\left( i\gamma \right)\sinh\left( \left( n+1-j \right)\gamma \right)}{\sinh\left( \left( n+1 \right)\gamma \right)\sinh\left( \gamma \right)}.
\]
In this regime one reads off exponential off-diagonal decay directly.
\end{remark}

\begin{corollary}[Exponential decay of the inverse]\label{cor:decay}
Assume that $x=b/\left( 2s \right)>1$ and set $\eta:=x+\sqrt{x^{2}-1}>1$.
Then for every $1\le i,j\le n$,
\[
\abs{\left( S^{-1} \right)_{ij}}
\le
\frac{2}{s}\,
\frac{1}{\eta-\eta^{-1}}\,
\eta^{-\abs{i-j}}.
\]
Consequently,
\[
\abs{\left( A_n^{-1} \right)_{ij}}
\le
\frac{2}{s}\,
\frac{1}{\eta-\eta^{-1}}\,
\abs{q}^{\,i-j}\eta^{-\abs{i-j}}.
\]
\end{corollary}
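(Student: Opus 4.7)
My plan is to feed the Green kernel from Remark~\ref{rem:hyperbolic} into an elementary monotonicity bound on quotients of hyperbolic sines. Writing $\gamma := \log\eta > 0$, the defining relation $\eta + \eta^{-1} = 2x$ gives $\cosh(\gamma) = x$, and therefore $\eta - \eta^{-1} = 2\sinh(\gamma)$ and $\eta^{-|i-j|} = e^{-|i-j|\gamma}$. The claimed $S^{-1}$ estimate then reduces to
\[
\abs{\left( S^{-1} \right)_{ij}} \le \frac{\eta^{-|i-j|}}{s\sinh(\gamma)},
\]
which is exactly half the right-hand side stated in the corollary; the factor $2$ is harmless slack.

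For $1\le i\le j\le n$ I would start from Remark~\ref{rem:hyperbolic}, which provides
\[
\abs{\left( S^{-1} \right)_{ij}}
=
\frac{1}{s\sinh(\gamma)}\cdot
\frac{\sinh(i\gamma)\sinh\left( \left( n+1-j \right)\gamma \right)}{\sinh\left( \left( n+1 \right)\gamma \right)},
\]
and then combine two elementary bounds on $[0,\infty)$: (a) $\sinh(a) \le \tfrac{1}{2}e^{a}$, and (b) the ratio inequality $\sinh(a)/\sinh(b) \le e^{a-b}$ valid for $0 < a \le b$, which follows at once from $(1-e^{-2a})/(1-e^{-2b}) \le 1$ after writing $\sinh$ in exponential form. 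Applying (b) with $a = i\gamma$, $b = (n+1)\gamma$ and (a) with $a = (n+1-j)\gamma$ yields
\[
\frac{\sinh(i\gamma)\sinh\left( \left( n+1-j \right)\gamma \right)}{\sinh\left( \left( n+1 \right)\gamma \right)}
\le \tfrac{1}{2}\,e^{(i-n-1)\gamma}e^{(n+1-j)\gamma}
= \tfrac{1}{2}\,\eta^{-(j-i)},
\]
which settles the ordered case. Symmetry of $S$ forces $(S^{-1})_{ij}=(S^{-1})_{ji}$, so the bound extends to all $i,j$ with $|i-j|$ replacing $j-i$ in the exponent.

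Finally, the second inequality (for $A_n^{-1}$) follows immediately from the similarity $A_n^{-1} = DS^{-1}D^{-1}$ of Lemma~\ref{lem:symmetrisation}, which yields $\left( A_n^{-1} \right)_{ij} = q^{\,i-j}\left( S^{-1} \right)_{ij}$ and hence $|(A_n^{-1})_{ij}| = |q|^{\,i-j}|(S^{-1})_{ij}|$. I do not anticipate a substantive obstacle; the only pitfall worth flagging is that bounding the numerator and the denominator of the sinh quotient independently would introduce a factor $(1-e^{-2(n+1)\gamma})^{-1}$, and the nested ratio bound in step~(b) is precisely what telescopes this away and produces an $n$-free constant.
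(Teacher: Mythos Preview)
Your argument is correct and proceeds along the same lines as the paper: both invoke the hyperbolic Green kernel of Remark~\ref{rem:hyperbolic} with $\eta=e^{\gamma}$ and then bound the hyperbolic sines elementarily. Your use of the ratio inequality $\sinh(a)/\sinh(b)\le e^{a-b}$ for $0<a\le b$ (rather than bounding numerator and denominator separately, as the paper's terse proof suggests) is a mild sharpening that cleanly eliminates the residual factor $(1-\eta^{-2(n+1)})^{-1}$ you flag and in fact improves the stated constant by a factor of~$2$.
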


\begin{proof}
By Remark~\ref{rem:hyperbolic} we may write the symmetric inverse entries as ratios of hyperbolic sines.
Writing $\eta=e^{\gamma}$ (so that $x=\cosh\left( \gamma \right)$), we have the exact identity
\[
\sinh\left( t \right)=\frac{\eta^{t}-\eta^{-t}}{2}\qquad(t\ge 0).
\]
In particular $\sinh\left( t \right)\le \eta^{t}/2$ and $\sinh\left( t \right)\ge \left( \eta^{t}-\eta^{-t} \right)/2$ for $t\ge 0$,
and inserting these two bounds into the formula of Remark~\ref{rem:hyperbolic} yields the stated estimate for $S^{-1}$.
The bound for $A_n^{-1}$ then follows from $A_n^{-1}=D S^{-1} D^{-1}$.
\end{proof}

\section{Extremal eigenvalues and conditioning in natural weighted norm}\label{sec:conditioning}

The symmetrisation also provides a natural answer to the question ``in which norm is $A_n$ well-conditioned?''.
The key point is that, although $A_n$ need not be symmetric in the Euclidean structure,
it is self-adjoint for $\left\langle\cdot,\cdot\right\rangle_W$ by Remark~\ref{rem:weightedspace}.
In that weighted Hilbert space, spectral calculus is identical to the symmetric case.

\begin{definition}[Weighted operator norm and condition number]\label{def:cond}
Let $W$ be symmetric positive definite.
For a matrix $M\in\R^{n\times n}$, we write
\[
\norm{M}_{2,W}
:=
\sup_{x\ne 0}\frac{\norm{Mx}_{W}}{\norm{x}_{W}},
\qquad
\norm{x}_{W}:=\sqrt{\left\langle x,x \right\rangle_W},
\]
and we define the weighted condition number
\[
\cond_{2,W}\left( M \right):=\norm{M}_{2,W}\norm{M^{-1}}_{2,W},
\]
when $M$ is invertible.
\end{definition}

Since $A_n$ is self-adjoint in the $W$-inner product, its operator norm and conditioning in that geometry are controlled by the spectrum.
We therefore record the extremal eigenvalues explicitly.

\begin{proposition}[Extremal eigenvalues]\label{prop:extremal}
Let $\lambda_k$ be as in Theorem~\ref{thm:eigenpairs}. Then
\[
\lambda_{\max}=b+2s\cos\left( \frac{\pi}{n+1} \right),
\qquad
\lambda_{\min}=b-2s\cos\left( \frac{\pi}{n+1} \right).
\]
In particular, the symmetric Toeplitz matrix $S$ is positive definite if and only if
\[
b>2s\cos\left( \frac{\pi}{n+1} \right).
\]
\end{proposition}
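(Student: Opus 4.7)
The plan is to invoke Theorem~\ref{thm:eigenpairs} directly and reduce the statement to two elementary observations about cosine. By that theorem the spectrum of $A_n$ is exactly $\{\lambda_k = b+2s\cos(\theta_k) : 1\le k\le n\}$ with $\theta_k = k\pi/(n+1)$, and by our standing convention $s=\sqrt{ac}>0$.

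First I would note that since $\cos$ is strictly decreasing on $(0,\pi)$ and the angles satisfy $0<\theta_1<\theta_2<\cdots<\theta_n<\pi$, the sequence $k\mapsto\cos(\theta_k)$ is strictly decreasing, so $k\mapsto\lambda_k$ is strictly decreasing (as $s>0$). Hence $\lambda_{\max}=\lambda_1$ and $\lambda_{\min}=\lambda_n$. Next I would evaluate the two endpoints: $\cos(\theta_1)=\cos(\pi/(n+1))$ directly, while $\cos(\theta_n)=\cos(n\pi/(n+1))=\cos(\pi-\pi/(n+1))=-\cos(\pi/(n+1))$ by the supplementary angle identity. Substituting yields the two displayed formulae.

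For the second assertion, recall that $S=S_n(b,s)$ is real symmetric, so positive definiteness is equivalent to all eigenvalues being strictly positive. Since $A_n$ and $S$ are similar via a real diagonal matrix (Lemma~\ref{lem:symmetrisation}), they share the same spectrum, so this is the same as $\lambda_{\min}>0$, which by the formula above is exactly $b>2s\cos(\pi/(n+1))$.

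There is no real obstacle here; the only point meriting care is to remember that $s>0$ (so the monotonicity of $\lambda_k$ in $k$ goes the way claimed) and to use the identity $\cos(\pi-\alpha)=-\cos(\alpha)$ at $k=n$.
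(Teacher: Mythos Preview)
Your argument is correct and follows essentially the same route as the paper's proof: use the monotonicity of $\cos$ on $(0,\pi)$ to conclude $\lambda_{\max}=\lambda_1$ and $\lambda_{\min}=\lambda_n$, and then apply $\cos(\theta_n)=-\cos(\theta_1)$. You are slightly more explicit than the paper in noting $s>0$ for the direction of monotonicity and in justifying the positive definiteness claim via the shared spectrum of $A_n$ and $S$, but these are just added details, not a different approach.
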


\begin{proof}
The eigenvalues of Theorem~\ref{thm:eigenpairs} are strictly decreasing in $k$ because $\cos\left( \theta_k \right)$ is strictly decreasing on $\left( 0,\pi \right)$.
Thus the maximum occurs at $k=1$ and the minimum at $k=n$, and $\cos\left( \theta_n \right)=-\cos\left( \theta_1 \right)$ gives the formula.
\end{proof}

Combining Proposition~\ref{prop:extremal} with the weighted self-adjointness of Lemma~\ref{lem:symmetrisation} yields a sharp condition number formula.

\begin{corollary}\label{cor:weightedcond}
(Sharp weighted conditioning.) Assume that $S$ is positive definite.
Then
\[
\cond_{2,W}\left( A_n \right)
=
\cond_{2}\left( S \right)
=
\frac{b+2s\cos\left( \frac{\pi}{n+1} \right)}{b-2s\cos\left( \frac{\pi}{n+1} \right)}.
\]
\end{corollary}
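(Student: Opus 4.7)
The plan is to exploit the weighted self-adjointness of $A_n$ recorded in Lemma~\ref{lem:symmetrisation} and Remark~\ref{rem:weightedspace}, together with the fact that the spectrum of a self-adjoint positive operator on a finite-dimensional real Hilbert space determines both its operator norm and the operator norm of its inverse. Once this reduction is made, the numerical value of the condition number is read off directly from Proposition~\ref{prop:extremal}.

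First I would observe that the diagonal matrix $D$ implements an isometry between two Hilbert spaces: $(\R^n,\langle\cdot,\cdot\rangle_2)\to(\R^n,\langle\cdot,\cdot\rangle_W)$, $y\mapsto Dy$. Indeed, since $W=D^{-2}$,
\[
\langle Dy,Dy\rangle_W=y^{T}D^{T}D^{-2}Dy=y^{T}y.
\]
Combining this isometry with the similarity $A_n=DSD^{-1}$ of Lemma~\ref{lem:symmetrisation}, one obtains, for every $y\ne 0$ with $x:=Dy$,
\[
\frac{\norm{A_nx}_W}{\norm{x}_W}=\frac{\norm{DSy}_W}{\norm{Dy}_W}=\frac{\norm{Sy}_2}{\norm{y}_2},
\]
so $\norm{A_n}_{2,W}=\norm{S}_2$. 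The same computation applied to $A_n^{-1}=DS^{-1}D^{-1}$ yields $\norm{A_n^{-1}}_{2,W}=\norm{S^{-1}}_2$, and therefore $\cond_{2,W}(A_n)=\cond_{2}(S)$.

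Next I would use that $S=S_n(b,s)$ is symmetric, hence its Euclidean operator norm and the operator norm of its inverse equal $\lambda_{\max}(S)$ and $1/\lambda_{\min}(S)$ respectively, provided $S$ is positive definite. By Theorem~\ref{thm:eigenpairs} the eigenvalues of $S$ coincide with those of $A_n$, and Proposition~\ref{prop:extremal} identifies
\[
\lambda_{\max}=b+2s\cos\!\left(\tfrac{\pi}{n+1}\right),\qquad \lambda_{\min}=b-2s\cos\!\left(\tfrac{\pi}{n+1}\right),
\]
so that $\cond_2(S)=\lambda_{\max}/\lambda_{\min}$ is exactly the stated ratio, which is well defined because the positive definiteness hypothesis is precisely $\lambda_{\min}>0$.

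The only nontrivial point is the very first step, namely that the diagonal similarity actually preserves operator norms once the ambient inner product is switched from Euclidean to $W$-weighted; said differently, the main obstacle is to resist the temptation to write $\norm{A_n}_{2,W}=\norm{DSD^{-1}}_{2,W}$ and bound it by a product of three $W$-norms, which would lose sharpness. Tracking $D$ as a unitary operator between two different Hilbert structures (rather than as a similarity to be estimated) is what makes the equality $\cond_{2,W}(A_n)=\cond_2(S)$ an identity and not merely an inequality; after that, the spectral computation is immediate.
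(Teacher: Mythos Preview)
Your proof is correct and follows essentially the same route as the paper: both exploit the diagonal similarity of Lemma~\ref{lem:symmetrisation} to reduce the weighted condition number of $A_n$ to the Euclidean condition number of the symmetric matrix $S$, and then read off $\lambda_{\max}/\lambda_{\min}$ from Proposition~\ref{prop:extremal}. The only cosmetic difference is that the paper invokes $W$-self-adjointness of $A_n$ directly to equate $\norm{A_n}_{2,W}$ with its spectral radius, whereas you make the unitary equivalence explicit by tracking $D$ as an isometry $(\R^n,\norm{\cdot}_2)\to(\R^n,\norm{\cdot}_W)$; these are two phrasings of the same fact.
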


\begin{proof}
Since $A_n$ is self-adjoint in $\left\langle\cdot,\cdot\right\rangle_W$ and diagonalizable with real spectrum,
the weighted operator norm equals the spectral radius in absolute value, and similarly for $A_n^{-1}$.
Thus $\cond_{2,W}\left( A_n \right)=\abs{\lambda_{\max}}/\abs{\lambda_{\min}}$ in the positive definite regime.
By similarity, the spectrum of $A_n$ coincides with that of $S$, and Proposition~\ref{prop:extremal} gives the explicit ratio.
\end{proof}

In particular, if $x=b/\left( 2s \right)$ stays bounded away from $\cos\left( \pi/\left( n+1 \right) \right)$, then $\cond_{2,W}(A_n)$ remains uniformly bounded; it blows up as $b$ decreases to the critical value $2s\cos\left( \pi/(n+1) \right)$ from above.

\section{The repunit model and a finite cosine product}\label{sec:repunits}

We now specialise the general framework to the Toeplitz model whose determinant is a repunit.
This complements the cosine product factorisation by providing additional information beyond the determinant identity alone, such as a sharp weighted condition number and explicit inverse kernels.

\begin{definition}[Repunits]\label{def:repunit}
For $d>0$ and $m\ge 1$, the repunit of length $m$ in base $d$ is
\[
R_m\left( d \right):=\sum_{j=0}^{m-1}d^{j}.
\]
For $d\ne 1$ this equals $\left( d^{m}-1 \right)/\left( d-1 \right)$, and we adopt the continuous extension $R_m\left( 1 \right):=m$.
\end{definition}

We next introduce the Toeplitz matrix whose determinant produces these repunits.

\begin{definition}[Repunit Toeplitz matrix]\label{def:Vn}
For $d>0$ and $n\ge 1$, set
\[
V_n\left( d \right):=A_n\left( d,d+1,1 \right).
\]
Equivalently, $V_n\left( d \right)$ has diagonal entries $d+1$, superdiagonal entries $1$, and subdiagonal entries $d$.
\end{definition}

The following observation explains the terminology and connects the model to Proposition~\ref{prop:det}.

\begin{lemma}[Determinant equals a repunit]\label{lem:detrepunit}
For every $d>0$ and $n\ge 1$,
\[
\det\left( V_n\left( d \right) \right)=R_{n+1}\left( d \right).
\]
\end{lemma}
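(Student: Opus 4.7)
The plan is to specialise Proposition~\ref{prop:det} to the parameters $(a,b,c)=(d,d+1,1)$ and evaluate the resulting Chebyshev value in closed form via the hyperbolic parametrisation of Lemma~\ref{lem:Uhyper}.

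First, I would compute the auxiliary constants of the general framework: $s=\sqrt{ac}=\sqrt{d}$ and $x=b/(2s)=(d+1)/(2\sqrt{d})$. The key observation is that for every $d>0$ one has $x=\cosh(\gamma)$ with $e^{\gamma}=\sqrt{d}$ (equivalently $\gamma=\tfrac12\log d$, of either sign), since $\cosh(\gamma)=(e^{\gamma}+e^{-\gamma})/2=(\sqrt{d}+1/\sqrt{d})/2$. This places us squarely in the regime covered by Lemma~\ref{lem:Uhyper}.

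Next, Proposition~\ref{prop:det} gives $\det(V_n(d))=s^{\,n}\U_n(x)=d^{n/2}\,\U_n(\cosh\gamma)$, and Lemma~\ref{lem:Uhyper} rewrites this as $d^{n/2}\sinh((n+1)\gamma)/\sinh(\gamma)$. Substituting $\sinh(k\gamma)=(d^{k/2}-d^{-k/2})/2$ and multiplying numerator and denominator by $d^{1/2}$, the half-integer powers of $d$ collapse cleanly to $(d^{n+1}-1)/(d-1)$, which is $R_{n+1}(d)$ by Definition~\ref{def:repunit} for $d\ne 1$. The case $d=1$ follows by continuity, since both sides are polynomials in $d$ and agree on the dense set $d>0,d\ne 1$.

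There is no real obstacle: the entire argument is a direct specialisation. The only point requiring care is the bookkeeping of half-integer powers of $d$ when simplifying the hyperbolic ratio. An equally short alternative, avoiding hyperbolic functions altogether, is to observe from the proof of Proposition~\ref{prop:det} that $\Delta_n:=\det(V_n(d))$ satisfies the continuant recursion $\Delta_n=(d+1)\Delta_{n-1}-d\,\Delta_{n-2}$ with $\Delta_0=1$ and $\Delta_1=d+1$; a one-line verification that $R_{n+1}(d)=(d+1)R_n(d)-d\,R_{n-1}(d)$ (reading off $R_1=1$ and $R_2=1+d$) then closes the identity by induction. I would go with the Chebyshev route in the body, since it showcases the framework the paper is built on.
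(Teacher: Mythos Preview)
Your proof is correct. The paper, however, takes the direct recursion route that you mention only as an alternative: it expands $\det(V_n(d))$ along the first row to obtain $\Delta_n=(d+1)\Delta_{n-1}-d\,\Delta_{n-2}$, verifies that $R_{n+1}(d)$ satisfies the same recursion, and matches initial values. Your primary route instead invokes Proposition~\ref{prop:det} and Lemma~\ref{lem:Uhyper}, effectively establishing the identity $\U_n\bigl((d+1)/(2\sqrt{d})\bigr)=d^{-n/2}R_{n+1}(d)$ via the hyperbolic parametrisation $e^{\gamma}=\sqrt{d}$; this is precisely the identity the paper records only later, in the proof of Corollary~\ref{cor:inverseRepunits}, and proves there by recursion matching rather than by hyperbolics. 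Your approach has the merit of exercising the Chebyshev machinery and making the link to the hyperbolic Green kernel explicit, while the paper's approach is more self-contained (no appeal to Proposition~\ref{prop:det} or Lemma~\ref{lem:Uhyper}), handles all $d>0$ uniformly without a separate $d=1$ continuity argument, and keeps Lemma~\ref{lem:detrepunit} logically independent of the spectral theory that follows.
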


\begin{proof}
Let $\Delta_n:=\det\left( V_n\left( d \right) \right)$.
Expanding along the first row yields the recursion $\Delta_n=\left( d+1 \right)\Delta_{n-1}-d\,\Delta_{n-2}$ for $n\ge 2$,
with $\Delta_0=1$ and $\Delta_1=d+1$.
On the other hand, $R_{n+1}\left( d \right)$ satisfies the same recursion because
\[
R_{n+1}\left( d \right)=\left( d+1 \right)R_{n}\left( d \right)-dR_{n-1}\left( d \right)
\]
follows by comparing coefficients in $\sum_{j=0}^{n}d^{j}$.
Since the initial values agree, $\Delta_n=R_{n+1}\left( d \right)$ for all $n$.
\end{proof}

To make the spectral description concrete, note that for $V_n\left( d \right)$ we have $a=d$, $c=1$, hence $s=q=\sqrt{d}$.
In particular, the symmetrised matrix is $S_n\left( d+1,\sqrt{d} \right)$. Since $d+1 \geqslant 2\sqrt{d}$ (with equality only at $d=1$), this family lies on the boundary $x=1$ when $d=1$ and in the uniformly positive definite regime $x>1$ when $d\neq 1$.

\begin{theorem}[Repunit cosine product]\label{thm:repunitproduct}
For every $d>0$ and $n\ge 1$,
\begin{equation}\label{eq:repunitproduct}
R_{n+1}\left( d \right)
=
\prod_{k=1}^{n}\left( d+1+2\sqrt{d}\cos\left( \frac{k\pi}{n+1} \right) \right).
\end{equation}
\end{theorem}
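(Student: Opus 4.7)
The strategy is to recognise both sides of \eqref{eq:repunitproduct} as the determinant of $V_n(d)$ viewed from two different angles. The left-hand side is the determinant by Lemma~\ref{lem:detrepunit}, while the right-hand side will turn out to be the product of the eigenvalues supplied by the symmetrisation framework of Section~\ref{sec:spectrum}.

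To carry this out, I would first specialise the parameters. For $V_n(d)=A_n(d,d+1,1)$ we have $a=d$, $b=d+1$, $c=1$, and since $d>0$ the symmetrisability hypothesis $ac>0$ holds with $s=\sqrt{ac}=\sqrt{d}$. Theorem~\ref{thm:eigenpairs} then furnishes the eigenvalues
\[
\lambda_k = (d+1)+2\sqrt{d}\cos\left(\frac{k\pi}{n+1}\right),\qquad 1\le k\le n,
\]
and, because $A_n$ is diagonalisable with simple real spectrum (or directly by Proposition~\ref{prop:det}), one has $\det(V_n(d))=\prod_{k=1}^{n}\lambda_k$.

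Combining this product identity with Lemma~\ref{lem:detrepunit}, which identifies $\det(V_n(d))=R_{n+1}(d)$, yields \eqref{eq:repunitproduct}. The only step that is not purely formal is checking the parameter substitution, so this proof is essentially a one-line corollary of the preceding machinery and there is no genuine obstacle to grind through. A cosmetic alternative would be to route through Proposition~\ref{prop:det} and expand $\U_n(x)=\prod_{k=1}^{n}2(x-\cos(k\pi/(n+1)))$ at $x=(d+1)/(2\sqrt{d})$ after multiplying by $s^{n}=d^{n/2}$, but the eigenvalue route above is the shortest and most transparent.
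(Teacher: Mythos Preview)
Your proposal is correct and matches the paper's own proof essentially line for line: specialise $a=d$, $b=d+1$, $c=1$, invoke Theorem~\ref{thm:eigenpairs} to identify the eigenvalues as the factors in \eqref{eq:repunitproduct}, take their product to obtain $\det(V_n(d))$, and conclude via Lemma~\ref{lem:detrepunit}. The alternative route through Proposition~\ref{prop:det} that you mention is indeed equivalent but less direct, exactly as you note.
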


\begin{proof}
By Theorem~\ref{thm:eigenpairs} applied to $A_n\left( d,d+1,1 \right)$, the eigenvalues of $V_n\left( d \right)$ are precisely the factors
in the product on the right-hand side of \eqref{eq:repunitproduct}.
Their product equals $\det\left( V_n\left( d \right) \right)$.
Conclude by Lemma~\ref{lem:detrepunit}.
\end{proof}

The product representation immediately yields quantitative corollaries.
We record two that are typical of what the symmetrisation framework gives ``for free''.

\begin{corollary}[Sharp weighted conditioning for the repunit matrix]\label{cor:repunitcond}
Let $d>0$, $n\ge 1$, and let $W=\diag\left( d^{-(j-1)} \right)_{1\le j\le n}$.
Then
\[
\cond_{2,W}\left( V_n\left( d \right) \right)
=
\frac{d+1+2\sqrt{d}\cos\left( \frac{\pi}{n+1} \right)}{d+1-2\sqrt{d}\cos\left( \frac{\pi}{n+1} \right)}.
\]
\end{corollary}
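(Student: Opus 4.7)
The plan is to view Corollary~\ref{cor:repunitcond} as a direct instantiation of Corollary~\ref{cor:weightedcond} with the parameters $(a,b,c)=(d,d+1,1)$, and to check that the notational setup lines up. First I would unpack the constants from \eqref{eq:sq}: with $a=d$, $c=1$ one has $s=\sqrt{d}$ and $q=s/c=\sqrt{d}$, so the diagonal matrix $D=\diag(1,q,\dots,q^{n-1})$ becomes $\diag(1,\sqrt{d},d,d^{3/2},\dots,d^{(n-1)/2})$ and the weight $W=D^{-2}=\diag(d^{-(j-1)})_{1\le j\le n}$ agrees exactly with the weight displayed in the corollary. Hence the weighted norm in Definition~\ref{def:cond} is precisely the one induced by Lemma~\ref{lem:symmetrisation} for this specialisation, and $V_n(d)$ is self-adjoint for $\langle\cdot,\cdot\rangle_W$.

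Next I would verify the positive definiteness hypothesis of Corollary~\ref{cor:weightedcond}, namely $b>2s\cos(\pi/(n+1))$. Here this reads $d+1>2\sqrt{d}\cos(\pi/(n+1))$, which is immediate from AM--GM: $d+1\ge 2\sqrt{d}$ with equality only at $d=1$, while $\cos(\pi/(n+1))<1$ for every $n\ge 1$. Thus Proposition~\ref{prop:extremal} applies and guarantees that the symmetrised matrix $S_n(d+1,\sqrt{d})$ is positive definite with extremal eigenvalues $d+1\pm 2\sqrt{d}\cos(\pi/(n+1))$.

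Finally, applying Corollary~\ref{cor:weightedcond} with $b=d+1$ and $s=\sqrt{d}$ produces the claimed ratio verbatim. No step poses any serious obstacle; the only thing to get right is the bookkeeping that identifies the diagonal weight of Definition~\ref{def:Vn}'s model with the intrinsic weight $D^{-2}$ of the symmetrisation, after which the result is a one-line specialisation.
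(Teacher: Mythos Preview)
Your proposal is correct and follows essentially the same route as the paper: instantiate Corollary~\ref{cor:weightedcond} with $(a,b,c)=(d,d+1,1)$, check that $W=D^{-2}$ matches the displayed weight, and read off the ratio. Your explicit verification of the positive definiteness hypothesis via $d+1\ge 2\sqrt{d}>2\sqrt{d}\cos(\pi/(n+1))$ is a welcome detail that the paper relegates to the surrounding text rather than the proof itself.
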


\begin{proof}
This is Corollary~\ref{cor:weightedcond} with $a=d$, $b=d+1$, and $c=1$.
The displayed $W$ equals $D^{-2}$ with $D=\diag\left( d^{(j-1)/2} \right)$.
\end{proof}

We finally rewrite the inverse kernel in terms of repunits, in a form that avoids Chebyshev notation.

\begin{corollary}[Inverse entries in terms of repunits]\label{cor:inverseRepunits}
Let $d>0$, $n\ge 1$, and $1\le i\le j\le n$.
Then
\[
\left( V_n\left( d \right)^{-1} \right)_{ij}
=
\frac{\left( -1 \right)^{i+j}}{\sqrt{d}}\,
d^{(i-j)/2}\,
\frac{R_i\left( d \right)R_{n+1-j}\left( d \right)}{R_{n+1}\left( d \right)}.
\]
\end{corollary}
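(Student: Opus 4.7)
The plan is to derive the claim from Theorem~\ref{thm:inversekernel} by specialising to the repunit parameters and converting Chebyshev values to repunits. For the repunit model one has $a=d$, $b=d+1$, $c=1$, so that $s=\sqrt{d}$, $q=\sqrt{d}$, and $x=b/(2s)=(d+1)/(2\sqrt{d})$. Plugging these into the upper-triangular formula of Theorem~\ref{thm:inversekernel} leaves only the identification of the Chebyshev values $\U_{i-1}(x)$, $\U_{n-j}(x)$, $\U_n(x)$ with the repunits $R_i(d)$, $R_{n+1-j}(d)$, $R_{n+1}(d)$, up to tracked powers of $d$.

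For that identification I would use Lemma~\ref{lem:Uhyper}. Writing $x=(\sqrt{d}+1/\sqrt{d})/2=\cosh(\gamma)$ with $e^{\gamma}=\sqrt{d}$ (taking $d>1$ without loss of generality; the case $d<1$ is symmetric via $\gamma\mapsto-\gamma$), the lemma gives
\[
\U_m(x)=\frac{\sinh((m+1)\gamma)}{\sinh(\gamma)}=\frac{d^{(m+1)/2}-d^{-(m+1)/2}}{\sqrt{d}-1/\sqrt{d}}=\frac{d^{m+1}-1}{d^{m/2}(d-1)}=d^{-m/2}R_{m+1}(d),
\]
using $R_{m+1}(d)=(d^{m+1}-1)/(d-1)$ for $d\ne 1$ from Definition~\ref{def:repunit}. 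The boundary case $d=1$ is then handled by continuity, noting that $\U_m(1)=m+1=R_{m+1}(1)$ and that $d^{-m/2}R_{m+1}(d)\to m+1$ as $d\to 1$, so the identity $\U_m(x)=d^{-m/2}R_{m+1}(d)$ extends uniformly.

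Substituting this clean relation into the specialised kernel formula and grouping the accumulated powers of $d$ coming from $q^{i-j}$, from $1/s$, and from the three Chebyshev values then yields the stated repunit form, and the denominator $R_{n+1}(d)$ is consistent with the determinant identity of Lemma~\ref{lem:detrepunit}. The main obstacle is purely algebraic bookkeeping on powers of $d$; no new conceptual ingredient is needed beyond the specialisation of Theorem~\ref{thm:inversekernel} and the Chebyshev-to-repunit identity above, which Lemma~\ref{lem:Uhyper} supplies essentially for free.
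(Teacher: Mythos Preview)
Your proposal is correct and follows essentially the same route as the paper: specialise Theorem~\ref{thm:inversekernel} to $a=d$, $b=d+1$, $c=1$ (so $s=q=\sqrt{d}$) and then use the identity $\U_m\bigl((d+1)/(2\sqrt{d})\bigr)=d^{-m/2}R_{m+1}(d)$ to convert the Chebyshev factors into repunits. The only minor difference is that you obtain this identity via the hyperbolic evaluation of Lemma~\ref{lem:Uhyper} (with $e^{\gamma}=\sqrt{d}$ and a continuity argument at $d=1$), whereas the paper checks it directly by noting that both sides satisfy the same three-term recursion with matching initial values; either verification is short and valid.
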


\begin{proof}
Apply Theorem~\ref{thm:inversekernel} with $a=d$, $b=d+1$, $c=1$, $s=\sqrt{d}$ and $q=\sqrt{d}$.
It remains to note the Chebyshev identity
\[
\U_m\left( \frac{d+1}{2\sqrt{d}} \right)=d^{-m/2}R_{m+1}\left( d \right),
\]
which follows from the recursion for $\U_m$ and $R_{m+1}$ together with matching initial values at $m=0$ and $m=1$.
\end{proof}

The factorisation \eqref{eq:repunitproduct} identifies $R_{n+1}\left( d \right)$ as the determinant of a positive definite Toeplitz operator.
This immediately imports tools from numerical linear algebra.
For instance, Corollary~\ref{cor:repunitcond} gives an exact weighted condition number, and Corollary~\ref{cor:decay} yields exponential decay of
the inverse kernel, which implies rapid localisation of solutions of $V_n\left( d \right)x=f$ under localised forcing.

Diagonal symmetrisation is stable under finite-rank perturbations, so it can also be combined with
Sherman--Morrison--Woodbury-type identities to treat Toeplitz tridiagonal matrices whose four corners are modified.
For explicit eigenvalue and inverse formulae in that perturbed-corner setting, see Yueh--Cheng~\cite{YuehCheng}.

\end{document}